\documentclass[a4paper,12pt]{article}
\usepackage[mathscr]{eucal}
\usepackage{amssymb}
\usepackage{latexsym}
\usepackage{amsthm}
\usepackage{amsmath}
\usepackage{a4wide}

\newtheorem{theorem}{Theorem}[section]

\newtheorem{lemma}[theorem]{Lemma}

\theoremstyle{definition}

\newtheorem{example}[theorem]{Example}

\makeatletter
\@addtoreset{equation}{section}

\makeatother

\title{Polynomial properties on large symmetric association schemes} 

\author{
Hiroshi Nozaki
}


\begin{document}
\maketitle

\renewcommand{\thefootnote}{\fnsymbol{footnote}}
\footnote[0]{2010 Mathematics Subject Classification: 
05E30 
(05B20).
}

\begin{abstract}
In this paper we characterize  ``large'' regular graphs using certain entries in
the projection matrices onto the eigenspaces of the graph. As a corollary  of this result, we show that ``large'' association schemes become $P$-polynomial association schemes. Our results are summarized as follows. 
Let $G=(V,E)$ be a connected $k$-regular graph with $d+1$ distinct eigenvalues $k=\theta_0>\theta_1>\cdots>\theta_d$. 
Since the diameter of $G$ is at most $d$, we have the Moore bound 
\[
|V| \leq M(k,d)=1+k \sum_{i=0}^{d-1}(k-1)^i. 
\]
Note that if $|V|> M(k,d-1)$ holds, the diameter of $G$ is equal to $d$. 
Let $E_i$ be the orthogonal projection matrix onto the eigenspace
corresponding to $\theta_i$. Let $\partial(u,v)$ be the path distance of $u,v \in V$. 

\noindent
{\bf Theorem.} {\it Assume $|V|> M(k,d-1)$ holds. Then for $x,y \in V$ with $\partial(x,y)=d$, 
 the $(x,y)$-entry of  $E_i$ is equal to 
\[
-\frac{1}{|V|}\prod_{j=1,2,\ldots,d, j \ne i} \frac{\theta_0-\theta_j}{\theta_i-\theta_j}. 
\]}
If a symmetric association scheme $\mathfrak{X}=(X,\{R_i\}_{i=0}^d)$ has a relation $R_i$ such that the graph $(X,R_i)$ satisfies the above condition, 
then $\mathfrak{X}$ is $P$-polynomial. Moreover we show the ``dual'' version of this theorem for spherical sets and $Q$-polynomial association  schemes. 
\end{abstract}

\textbf{Key words}: Polynomial association scheme, Moore bound, graph spectrum, 
 $s$-distance set, absolute bound. 

\section{Introduction}
A {\it symmetric association scheme} of class $d$ is a pair $\mathfrak{X} = (X, \{R_i \}_{i=0}^d)$, 
	where $X$ is a finite set and $\{R_i \}_{i=0}^d$ is a 
set of binary relations on $X$ satisfying 
	\begin{enumerate}
	\item $R_0 = \{(x, x) \mid x \in X \}$,
	\item $X \times X = \bigcup_{i=0}^d R_i$, and $R_i \cap R_j$ is empty if $i\ne j$, 
	\item $^tR_i = R_i$ for any $i \in\{ 0, 1,\ldots , d\}$, where $^t R_i = \{(y, x) \mid (x, y) \in R_i\}$, 
	\item for any $i,j,k\in \{0,1,\ldots ,d\}$, there exists an integer $p_{i j}^k$ such that for any pair $x,y \in X$ with $(x,y) \in R_k$, it holds that $p_{i j}^k= |\{ z \in X \mid (x,z) \in R_i, (z,y) \in R_j \}|$.  
	\end{enumerate}
The $i$-th {\it adjacency matrix} $A_i$ of $\mathfrak{X}$ is the matrix indexed by $X$ with 
the entry
\[
(A_i)_{xy}=
\begin{cases}
1 \text{ if $(x,y)\in R_i$}, \\
0 \text{ otherwise}. 
\end{cases}
\]
	The {\it Bose--Mesner algebra} $\mathfrak{A}$ of $\mathfrak{X}$ is the algebra generated 
	by the adjacency matrices $A_0, A_1, \ldots, A_d $ over the complex field $\mathbb{C}$. 
	Then $\{A_i\}_{i=0}^d$ is a natural basis of $\mathfrak{A}$, and $\mathfrak{A}$ is also closed under the 
Hadamard product, that is the entry-wise matrix product. 
	$\mathfrak{A}$ 
	has another remarkable basis which consists of  primitive idempotents $E_0, E_1, \ldots, E_d$ \cite[Section II.3]{BIb}.
	We define $P_i(j)$, $Q_i(j)$  
by the following equalities: 
	\[
	A_i= \sum_{j=0}^d P_i(j)E_j, \qquad
	E_i=\frac{1}{|X|}\sum_{j=0}^d Q_i(j) A_j.
\]
The values $P_i(j)$, $Q_i(j)$ are called the {\it parameters} of an association scheme. 
We use the notation  
$k_i=P_i(0)$ (degrees), and 
$m_i=Q_i(0)$  (multiplicities)
for $i=0,1,\ldots, d$.

A symmetric association scheme is called a {\it $P$-polynomial scheme} with respect to the ordering $A_0,A_1,\ldots, A_d $ 
	 if  
	there exists a polynomial $v_i$ of degree $i$ such that $A_i=v_i(A_1)$ for each $i \in \{0,1,\ldots, d \}$.
We say a symmetric association scheme is 
a $P$-polynomial scheme with respect to $A_i$
if it
has the $P$-polynomial property
with respect to some ordering
$A_0,A_i, A_{i_2},A_{i_3},\ldots ,A_{i_d}$.
	A symmetric association scheme is called a {\it $Q$-polynomial scheme}   
	with respect to the ordering $E_0,E_1, \ldots, E_d$ if  
	there exists a polynomial $v_i^{\ast}$ of degree $i$ 
	such that $E_i=v_i^{\ast}(E_1^{\circ})$ for each $i \in \{0,1,\ldots, d \}$, where $\circ$ means the multiplicity is the Hadamard product.  
	Moreover a symmetric association scheme is called
a $Q$-polynomial scheme with respect to $E_i$
if it
has the $Q$-polynomial property
with respect to some ordering
$E_0,E_i, E_{i_2},E_{i_3},\ldots ,E_{i_d}$.

The $P$-polynomial schemes and $Q$-polynomial 
schemes are interpreted as discrete cases of two-point homogeneous spaces and rank 1 symmetric spaces, respectively \cite[Section III.6]{BIb}, \cite[Chapter 9]{CSb}.  Wang \cite{W52} showed that compact two-point homogeneous spaces are   
compact rank 1 symmetric spaces and vise versa. 
Bannai and Ito \cite[Section III.6]{BIb} conjectured that if class $d$ is sufficiently large, 
a primitive association scheme is $P$-polynomial  
if and only if it is $Q$-polynomial. Here a symmetric association scheme $(X, \{R_i\}_{i=0}^d)$ is said to be {\it primitive} if the graph $(X,R_i)$ is 
connected for each $i=1,\ldots, d$. 
One of the main contributions is a sufficient condition for association schemes to have polynomial properties.  
Almost characterizations of the polynomial properties are proved by the relationship among the parameters on the scheme, see
\cite{BIb, BCNb, FG97, K12, KNX, KN12}. In this paper, 
we just focus on the size, and 
prove a sufficiently large association scheme has the polynomial property. 

There are two upper bounds for the size of a symmetric association scheme $\mathfrak{X}=(X,\{R_i\}_{i=0}^d)$.  
We have two interpretations of $\mathfrak{X}$ as regular graphs $A_i$ with eigenvalues $\{P_i(j)\}_{j=0}^d$, and spherical sets $E_i$ with inner products $\{Q_i(j)/|X|\}_{j=0}^d$.  If $A_i$ is connected, then $A_i$ is of diameter at most $d$, and  
we have the Moore bound. Namely if $P_i(0)$ is distinct from 
$P_i(1),P_i(2),\ldots,P_i(d)$, then we have 
\[
|X| \leq  M(k_i,d)=1+k_i\sum_{j=0}^{d-1}(k_i-1)^j.
\]
On the other hand, if the diagonal entries of $E_i$ are distinct from the others, $E_i$ becomes the Gram matrix of some spherical finite set with 
at most $d$ distances between distinct points. 
We have an upper bound for the cardinality of a spherical finite set with 
only $s$ distances \cite{DGS77}. Namely if $Q_i(0)$ is distinct from 
$Q_i(1),Q_i(2),\ldots,Q_i(d)$,
then
\[|X| \leq N(m_i,d) = \binom{m_i+d-1}{d}+\binom{m_i+d-2}{d-1}.\]
In the present paper, we show the 
following:  
\begin{enumerate}
\item[(i)]  If $P_i(0)$ is distinct from 
$P_i(1),P_i(2),\ldots,P_i(d)$ and $|X| >  M(k_i,d-1)$, then $(X,\{R_i\}_{i=0}^d)$ has the 
$P$-polynomial property with respect to $A_i$. 
\item[(ii)] If $Q_i(0)$ is distinct from 
$Q_i(1),Q_i(2),\ldots,Q_i(d)$ and 
$|X| > N(m_i,d-1)$, 
then $(X,\{R_i\}_{i=0}^d)$ has the 
$Q$-polynomial property with respect to $E_i$.
\end{enumerate}
Though these sufficient conditions are fairly simple, there are many 
examples including strongly regular graphs, Johnson or Hamming schemes of 
sufficiently large degree or multiplicity.

\section{Regular graphs and $P$-polynomial schemes} \label{sec_2}
Let $G=(V,E)$ be a connected $k$-regular graph with at most $d+1$ 
distinct eigenvalues, and $A$ the adjacency matrix. 
Since the diameter of $G$ is at most $d$,   
we have the Moore bound 
  \[
|V| \leq M(k,d)=1+k\sum_{j=0}^{d-1}(k-1)^j. 
\] 
Let $k=\theta_0>\theta_1>\theta_2>\cdots>\theta_s$ be distinct eigenvalues of $G$, 
where $s\leq d$. 
Let $E_i$ be the orthogonal projection
matrix onto the eigenspace corresponding to $\theta_i$. In particular $E_0=(1/|V|)J$, where $J$ is 
the all-ones matrix. 
Let $\partial(x,y)$ be the path distance of $x \in V$ and $y \in V$.  
Let $R_i=\{(x,y) \mid x,y \in V,\partial(x,y)=i \}$ and $R_i(x)=\{ y \mid y \in V, \partial(x,y)=i \}$.  
For each $i \in \{1,\ldots, d\}$, we define
\[
K_i= \prod_{j=1,2,\ldots,s, j\ne i} \frac{\theta_0-\theta_j}{\theta_i-\theta_j}.
\] 

We begin with the following result.
\begin{theorem} \label{thm:con_gra}
Let $G=(V,E)$ be a connected 
$k$-regular graph with diameter $d$. 
If $G$ has precisely $d+1$ distinct eigenvalues 
then, for $(x,y) \in R_d$, the $(x,y)$-entry of $E_i$ is $-K_i/|V|$ for each $i\in \{ 1,2,\ldots,d\}$. 
\end{theorem}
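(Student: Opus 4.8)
The plan is to express each idempotent $E_i$ as a polynomial in $A$ and then exploit the fact that the distance-$d$ entries detect only the top-degree term. Since $G$ has exactly $d+1$ distinct eigenvalues $\theta_0,\ldots,\theta_d$, the spectral decomposition $A=\sum_{j=0}^d\theta_j E_j$ together with $E_jE_l=\delta_{jl}E_j$ and $\sum_j E_j=I$ gives the Lagrange interpolation formula
\[
E_i=\prod_{\substack{j=0\\ j\ne i}}^{d}\frac{A-\theta_j I}{\theta_i-\theta_j}=\frac{1}{\prod_{j=0,\,j\ne i}^d(\theta_i-\theta_j)}\,p_i(A),
\]
where $p_i(x)=\prod_{j=0,\,j\ne i}^d(x-\theta_j)$ is \emph{monic} of degree $d$, so $p_i(A)=A^d+(\text{terms of degree}<d)$.

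First I would record the elementary walk-counting fact: for a pair $(x,y)\in R_d$, the entry $(A^m)_{xy}$ counts walks of length $m$ from $x$ to $y$, hence vanishes for every $m<d$ (any such walk would force $\partial(x,y)\le m$) and is positive for $m=d$. Consequently, in the expansion $p_i(A)=A^d+\sum_{m<d}c_mA^m$ all lower-order terms contribute $0$ to the $(x,y)$-entry, and we obtain the uniform identity
\[
(E_i)_{xy}=\frac{(A^d)_{xy}}{\prod_{j=0,\,j\ne i}^d(\theta_i-\theta_j)},
\]
valid simultaneously for every $i\in\{0,1,\ldots,d\}$.

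The single unknown $(A^d)_{xy}$ is then pinned down by the trivial case $i=0$. Since $E_0=(1/|V|)J$, its $(x,y)$-entry equals $1/|V|$, which forces
\[
(A^d)_{xy}=\frac{1}{|V|}\prod_{j=1}^d(\theta_0-\theta_j)
\]
(in particular showing the number of shortest paths between any two vertices at distance $d$ is a constant). Substituting this back into the identity for general $i$ and factoring out the $j=0$ and $j=i$ terms from numerator and denominator yields
\[
(E_i)_{xy}=\frac{1}{|V|}\cdot\frac{(\theta_0-\theta_i)\prod_{j=1,\,j\ne i}^d(\theta_0-\theta_j)}{(\theta_i-\theta_0)\prod_{j=1,\,j\ne i}^d(\theta_i-\theta_j)}=-\frac{K_i}{|V|},
\]
using $(\theta_0-\theta_i)/(\theta_i-\theta_0)=-1$, which is exactly the assertion.

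The argument is short because the right viewpoint does most of the work; the step requiring care is the observation that $p_i(A)$ is monic, so that the coefficient of $A^d$ is $1$ for \emph{every} $i$ and the distance-$d$ entry of each $E_i$ is the same scalar multiple of the common quantity $(A^d)_{xy}$. This is precisely what allows the calibration through the trivial term $E_0=J/|V|$ to determine all the remaining entries at once, and it is where the hypothesis of exactly $d+1$ distinct eigenvalues (so that $\deg p_i=d$ matches the diameter) is essential.
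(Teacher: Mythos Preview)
Your argument is correct. The paper's proof rests on the same underlying idea---that low-degree polynomials in $A$ have vanishing $(x,y)$-entry when $\partial(x,y)=d$---but organizes it a bit more economically. Instead of the full degree-$d$ Lagrange interpolant for $E_i$, the paper takes the degree-$(d-1)$ polynomial
\[
f_i(t)=\prod_{\substack{j=1\\ j\ne i}}^{d}\frac{t-\theta_j}{\theta_i-\theta_j},
\]
which satisfies $f_i(A)=K_iE_0+E_i$ directly; since $\deg f_i=d-1$, the $(x,y)$-entry of $f_i(A)$ is zero for $(x,y)\in R_d$, and the conclusion follows in one line without ever computing $(A^d)_{xy}$. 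Your route, via the monic degree-$d$ interpolants and a calibration through $E_0$, reaches the same place with one extra step, but has the pleasant side effect of explicitly identifying $(A^d)_{xy}=\tfrac{1}{|V|}\prod_{j=1}^d(\theta_0-\theta_j)$ as a constant independent of the pair $(x,y)\in R_d$.
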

\begin{proof}
Define  
\[
f_i(t)=\prod_{j=1,2,\ldots,d, j\ne i} \frac{t-\theta_j}{\theta_i-\theta_j}
\] 
for each $i \in \{1,2,\ldots, d\}$. Then we have
\[
f_i(A)=\sum_{j=0}^d f_i(\theta_j) E_j= K_i E_0 + E_i. 
\]
Because the degree of $f_i(t)$ is $d-1$ and $G$ is of diameter $d$,  the $(x,y)$-entry of $f_i(A)$ is equal to $0$ for $(x,y) \in R_d$. Therefore the $(x,y)$-entry of $E_i$ is equal to $-K_i/|V|$ for each $i \in \{1,2,\ldots, d\}$. 
\end{proof}
We now use the Moore bound to show that a large connected regular graph satisfies the assumption of Theorem~\ref{thm:con_gra}.
\begin{theorem}\label{thm:gen_P}
Let $G=(V,E)$ be a connected 
$k$-regular graph 
 with at most $d+1$ distinct eigenvalues. 
Assume $|V|> M(k,d-1)$ holds. Then we have the following. 
\begin{enumerate} 
\item $G$ is of diameter $d$.
\item $G$ has $d+1$ distinct eigenvalues. 
\item For $(x,y) \in R_d$, the $(x,y)$-entry of $E_i$ is $-K_i/|V|$ for each $i\in \{1,\ldots,d\}$. 
\item For each $x\in V$, the number of entries $-K_i/|V|$ in the 
$x$-th row of $E_i$ is at least $|V|-M(k,d-1)$.  
\end{enumerate}
\end{theorem}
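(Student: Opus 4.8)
The plan is to derive all four conclusions from the Moore bound together with Theorem~\ref{thm:con_gra}. First I would argue that the diameter of $G$ is exactly $d$. Let $d'$ denote the diameter; since $G$ is connected with at most $d+1$ distinct eigenvalues, a standard fact gives $d' \le d$. If we had $d' \le d-1$, then the Moore bound applied with diameter $d-1$ would yield $|V| \le M(k,d-1)$, contradicting the hypothesis $|V| > M(k,d-1)$. Hence $d' = d$, which is conclusion~(1). In particular $R_d \ne \emptyset$, so there is at least one pair $(x,y)$ at distance $d$.

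Next I would establish conclusion~(2), that $G$ has exactly $d+1$ distinct eigenvalues. The number $s+1$ of distinct eigenvalues of a connected graph always exceeds its diameter, so $s \ge d' = d$; combined with the standing assumption $s \le d$ this forces $s = d$. With~(1) and~(2) in hand, $G$ satisfies precisely the hypotheses of Theorem~\ref{thm:con_gra}, so for every $(x,y) \in R_d$ the $(x,y)$-entry of $E_i$ equals $-K_i/|V|$ for each $i \in \{1,\dots,d\}$; this is conclusion~(3). (Note that with $s=d$ the quantity $K_i$ defined before the theorem coincides with the product over $j \in \{1,\dots,d\}\setminus\{i\}$ appearing in Theorem~\ref{thm:con_gra}, so there is no ambiguity.)

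Finally, conclusion~(4): I want a lower bound on the number of $y \in V$ with $\partial(x,y)=d$, for an arbitrary fixed $x$. By~(3), each such $y$ contributes an entry equal to $-K_i/|V|$ in the $x$-th row of $E_i$, so it suffices to show $|R_d(x)| \ge |V| - M(k,d-1)$. The point is that the ball of radius $d-1$ around $x$, namely $\bigcup_{j=0}^{d-1} R_j(x)$, has size at most $M(k,d-1)$: this is exactly the Moore-bound counting argument applied locally (the number of vertices within distance $d-1$ of a fixed vertex in a $k$-regular graph is at most $1 + k\sum_{j=0}^{d-2}(k-1)^j = M(k,d-1)$, since from $x$ there are at most $k$ neighbours, and from each vertex at distance $j\ge 1$ at most $k-1$ new vertices at distance $j+1$). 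Since $V$ is the disjoint union $\bigcup_{j=0}^{d} R_j(x)$, we get $|R_d(x)| = |V| - \sum_{j=0}^{d-1}|R_j(x)| \ge |V| - M(k,d-1)$, which gives~(4).

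The only mildly delicate point is to make sure the local ball-size estimate used in step~4 is precisely $M(k,d-1)$ and not an off-by-one variant; this is the same inequality that underlies the global Moore bound quoted in the statement, just truncated at radius $d-1$, so I would simply invoke it. Everything else is bookkeeping, and steps~(1)--(3) are immediate from the two cited spectral facts (diameter $<$ number of distinct eigenvalues, and the Moore bound) plus Theorem~\ref{thm:con_gra}.
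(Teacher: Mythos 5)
Your proposal is correct and follows essentially the same route as the paper: the Moore bound forces diameter exactly $d$ and hence exactly $d+1$ eigenvalues, Theorem~\ref{thm:con_gra} gives the entries on $R_d$, and the local ball-size estimate $\bigl|\bigcup_{j=0}^{d-1}R_j(x)\bigr|\leq M(k,d-1)$ gives the count in (4). The extra care you take over the off-by-one issue in the radius-$(d-1)$ ball is sound and matches what the paper tacitly uses.
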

\begin{proof}
(1): The diameter of $G$ is at most $d$ because $G$ has at most $d+1$ distinct eigenvalues. 
If the diameter of $G$ is smaller than $d$, then we have $|V|\leq M(k,d-1)$, a contradiction. Therefore the diameter of $G$ is $d$. 
 
(2): 
From (1), $G$ has at least $d+1$ distinct eigenvalues. Therefore $G$ has exactly $d+1$ distinct eigenvalues. 

(3): This follows immediately from (1), (2), and Theorem~\ref{thm:con_gra}. 

(4):
For each $x \in V$, we have $|\bigcup_{i=0}^{d-1} R_i(x)| \leq M(k,d-1)$. Therefore $|R_d(x)|\geq |V|- M(k,d-1)$ holds. This implies that 
the number of entries $-K_i/|V|$ in the 
$x$-th row of $E_i$ is at least $|V|-M(k,d-1)$. 
\end{proof}

We apply the above theorem to symmetric association schemes. First we recall the following easy fact. 
\begin{lemma}[{\cite[Lemma 3.2, page 229]{Gb}} or  {\cite[Lemma 3.2]{KNX}}] \label{thm:known}
Let $\mathfrak{X}=(X,\{R_i\}_{i=0}^d)$ be a symmetric association scheme of class $d$. 
Suppose that $P_j(0)$ is distinct from 
$P_j(i)$ for $i=1,\ldots,d$. 
Then $(X,R_j)$ has diameter $d$ if and only if $\mathfrak{X}$ is $P$-polynomial with respect to $A_j$
\end{lemma}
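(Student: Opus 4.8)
I would prove the lemma by translating everything into the Bose--Mesner algebra $\mathfrak{A}$ and exploiting one elementary fact: if $M=\sum_i c_iA_i\in\mathfrak{A}$, then, since the $A_i$ have pairwise disjoint supports, the $0/1$ support matrix of $M$ equals $\sum_{i:\,c_i\neq 0}A_i$; in particular the support matrix of any element of $\mathfrak{A}$ is again a multiplicity-free sum of adjacency matrices. I would also use the standard fact that a connected graph of diameter $\delta$ has at least $\delta+1$ distinct eigenvalues, together with the observation that the minimal polynomial of $A_j$ has degree at most $\dim\mathfrak{A}=d+1$, so the graph $\Gamma:=(X,R_j)$ has at most $d+1$ distinct eigenvalues and diameter at most $d$. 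The hypothesis $P_j(0)\neq P_j(i)$ for $i=1,\dots,d$ is used exactly once, to guarantee connectivity of $\Gamma$: writing $A_j=\sum_i P_j(i)E_i$, the $k_j$-eigenspace of $A_j$ is the image of $E_0$, which is one-dimensional, so the largest eigenvalue $k_j$ of the regular graph $\Gamma$ is simple and $\Gamma$ is connected.

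For the implication ``$\Gamma$ has diameter $d$ $\Rightarrow$ $\mathfrak{X}$ is $P$-polynomial with respect to $A_j$'', first note that $\Gamma$ then has exactly $d+1$ distinct eigenvalues, so $I,A_j,\dots,A_j^{\,d}$ are linearly independent and hence a basis of $\mathfrak{A}$, that is, $\mathfrak{A}=\mathbb{C}[A_j]$. For $0\le\ell\le d$ let $D_\ell$ be the $0/1$ matrix with $(D_\ell)_{xy}=1$ exactly when $\partial(x,y)\le\ell$; as the support matrix of $I+A_j+\dots+A_j^{\,\ell}\in\mathfrak{A}$ we may write $D_\ell=\sum_{i\in T_\ell}A_i$. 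Since $\Gamma$ is connected of diameter exactly $d$, every distance $0,1,\dots,d$ occurs, so the inclusions $T_0\subsetneq T_1\subsetneq\dots\subsetneq T_d=\{0,1,\dots,d\}$ are all strict, a maximal chain of subsets of a $(d+1)$-element set; therefore each difference $T_\ell\setminus T_{\ell-1}$ is a singleton $\{i_\ell\}$, the distance-$\ell$ relation of $\Gamma$ equals $R_{i_\ell}$, and $i_0=0$, $i_1=j$. Writing $A_j^{\,\ell}=\sum_i b_iA_i$, the support of $A_j^{\,\ell}$ is contained in $D_\ell$, so $b_i=0$ for $i\notin T_\ell$, while its value on $R_{i_\ell}$ counts walks of length $\ell$ between vertices at distance exactly $\ell$ and is therefore positive; hence $A_j^{\,\ell}=\sum_{m=0}^{\ell}b_{i_m}A_{i_m}$ with $b_{i_\ell}\neq 0$. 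Inverting this triangular system produces polynomials $v_\ell$ of degree $\ell$ with $A_{i_\ell}=v_\ell(A_j)$, which is precisely the $P$-polynomial property with respect to the ordering $A_0,A_j,A_{i_2},\dots,A_{i_d}$.

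For the converse, assume $\mathfrak{X}$ is $P$-polynomial with respect to an ordering $A_0,A_j,A_{i_2},\dots,A_{i_d}$, say $A_{i_\ell}=v_\ell(A_j)$ with $\deg v_\ell=\ell$ (and $i_0:=0$, $i_1:=j$). The triangular relations give $\mathrm{span}\{I,A_j,\dots,A_j^{\,\ell}\}=\mathrm{span}\{A_{i_0},A_{i_1},\dots,A_{i_\ell}\}$ for each $\ell$; for $\ell=d$ this forces $I,\dots,A_j^{\,d}$ to be independent, so $\Gamma$ has $d+1$ distinct eigenvalues and diameter at most $d$. Next, for $m\le\ell$ the matrix $A_{i_m}$ is a linear combination of $I,A_j,\dots,A_j^{\,m}$, each supported on pairs at distance $\le m\le\ell$, so $R_{i_0}\cup\dots\cup R_{i_\ell}\subseteq\{(x,y):\partial(x,y)\le\ell\}$; conversely $I+A_j+\dots+A_j^{\,\ell}$ lies in $\mathrm{span}\{A_{i_0},\dots,A_{i_\ell}\}$ and has support exactly $\{(x,y):\partial(x,y)\le\ell\}$, yielding the reverse inclusion. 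Taking $\ell=d-1$ gives $\{(x,y):\partial(x,y)\le d-1\}=R_{i_0}\cup\dots\cup R_{i_{d-1}}=(X\times X)\setminus R_{i_d}\neq X\times X$, so some pair of vertices is at distance at least $d$; together with diameter at most $d$ this gives that $\Gamma$ has diameter exactly $d$.

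The only point requiring genuine care is the bookkeeping of supports: verifying that the distance-ball matrices $D_\ell$ really lie in $\mathfrak{A}$ and that the support of a real combination of the $A_i$ is a $0/1$-sum of the $A_i$. Once these are in place, both directions reduce to a chain-of-subsets count together with a triangular inversion, and I do not anticipate any further obstacle.
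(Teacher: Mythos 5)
Your proof is correct. Note that the paper itself gives no proof of this lemma---it is quoted from Godsil's book and from Kurihara--Nozaki---and your argument is essentially the standard one found in those references: identify the distance-$\ell$ ball matrices $D_\ell$ as support matrices of elements of the Bose--Mesner algebra, extract a maximal chain $T_0\subsetneq\cdots\subsetneq T_d$ to get a bijection between distances and relations, and invert the resulting triangular system. All the steps check out, including the use of the hypothesis $P_j(0)\neq P_j(i)$ solely to make the eigenvalue $k_j$ simple and hence $(X,R_j)$ connected; the only implicit point worth making explicit is that $R_{i_d}\neq\emptyset$ in the converse direction, which holds because the $A_i$ form a basis of $\mathfrak{A}$ and so are nonzero.
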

By Theorem \ref{thm:gen_P} and Lemma \ref{thm:known}, 
we immediately obtain the following theorem which shows the $P$-polynomial property of a large association scheme. 
\begin{theorem}
\label{thm:P-poly2}
Let $\mathfrak{X}=(X,\{R_i\}_{i=0}^d)$ be a symmetric association scheme. Suppose  $P_j(0)$ is distinct from $P_j(1),\ldots,P_j(d)$.  
 If $|X| > M(k_j,d-1)$ holds,   
then $\mathfrak{X}$ is 
a $P$-polynomial association scheme with respect to $A_j$.
\end{theorem}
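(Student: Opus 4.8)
The plan is to view the graph $(X,R_j)$ as an instance of the hypothesis of Theorem~\ref{thm:gen_P} and then to apply Lemma~\ref{thm:known}. First I would verify that $(X,R_j)$ is a connected $k_j$-regular graph with at most $d+1$ distinct eigenvalues. It is $k_j$-regular since $k_j=P_j(0)$ is the valency of $R_j$. The distinct eigenvalues of $A_j$ are exactly the distinct values among $P_j(0),P_j(1),\ldots,P_j(d)$, because $A_j=\sum_{i=0}^d P_j(i)E_i$ with the $E_i$ mutually orthogonal idempotents summing to the identity; in particular there are at most $d+1$ of them. For connectedness, recall that a $k$-regular graph is connected if and only if the eigenvalue $k$ is simple; here the eigenspace of $A_j$ for the eigenvalue $P_j(0)=k_j$ is the image of $E_0$, which has dimension $m_0=1$, precisely because $P_j(0)$ is assumed distinct from $P_j(1),\ldots,P_j(d)$. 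Hence $(X,R_j)$ is connected.

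Next I would apply Theorem~\ref{thm:gen_P} to $G=(X,R_j)$ with $k=k_j$, using the hypothesis $|X|>M(k_j,d-1)$. Part~(1) of that theorem gives that $(X,R_j)$ has diameter exactly $d$ (and part~(2) that $A_j$ has exactly $d+1$ distinct eigenvalues, although only the diameter statement is needed here). Finally, since $P_j(0)$ is distinct from $P_j(i)$ for $i=1,\ldots,d$ and $(X,R_j)$ has diameter $d$, Lemma~\ref{thm:known} yields that $\mathfrak{X}$ is $P$-polynomial with respect to $A_j$, which is the desired conclusion.

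There is no genuinely hard step remaining: all the substantive work is carried by the Moore-bound argument inside Theorem~\ref{thm:gen_P} and by the standard equivalence recorded as Lemma~\ref{thm:known}. The only point that calls for a line of justification is the reduction to a \emph{connected} regular graph, and this follows from the remark that the eigenvalue $k_j$ of $A_j$ is simple, which in turn is immediate from the distinctness hypothesis on $P_j(0)$.
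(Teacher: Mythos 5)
Your proposal is correct and follows exactly the paper's route: the paper derives this theorem immediately from Theorem~\ref{thm:gen_P} applied to the graph $(X,R_j)$ together with Lemma~\ref{thm:known}. The only addition is your explicit verification that $(X,R_j)$ is connected via the simplicity of the eigenvalue $P_j(0)$, a detail the paper leaves implicit but which you justify correctly.
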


We also remark that Theorem~\ref{thm:con_gra}, together with Lemma~\ref{thm:known}, generalizes the implication $(1)\Rightarrow (2)$ of the following known characterization of $P$-polynomial association schemes. 
\begin{theorem}[\cite{KNX,NT11}]
\label{thm:P-poly}
Let $\mathfrak{X}=(X,\{R_i\}^d_{i=0})$ be a symmetric
association scheme of class $d$.
Suppose that $P_j(0),P_j(1),\ldots,P_j(d)$
are mutually distinct.
Then the following are equivalent:
\begin{enumerate}
\item $\mathfrak{X}$ is a $P$-polynomial association scheme
with respect to  $A_j$.
\item There exists $l\in \{0,1,\ldots ,d\}$ such that
for each $h \in \{1,2,\ldots ,d\}$,
\begin{equation*}
\prod_{i=1,2,\ldots,d, 
i \neq h}
\frac{P_j(0)-P_j(i)}{P_j(h)-P_j(i)}= -Q_h(l).
\end{equation*}
\end{enumerate}
Moreover if $($2$)$ holds, then $A_l$ is the $d$-th matrix with respect to the 
resulting polynomial ordering.
\end{theorem}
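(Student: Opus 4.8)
The plan is to prove the two implications separately, reusing Theorem~\ref{thm:con_gra} and Lemma~\ref{thm:known}. Throughout, fix $j$ and note that since $P_j(0),\dots,P_j(d)$ are mutually distinct, the graph $(X,R_j)$ is a connected $P_j(0)$-regular graph with exactly $d+1$ distinct eigenvalues $P_j(0),\dots,P_j(d)$, the idempotent $E_h$ being the orthogonal projection onto the $P_j(h)$-eigenspace; connectedness holds because $P_j(0)=k_j$ is a simple eigenvalue (its eigenspace is $E_0=\frac1{|X|}J$). The left-hand side of $(2)$ is precisely the quantity $K_h$ of Theorem~\ref{thm:con_gra} for this graph, since that product depends only on the top eigenvalue $P_j(0)$ and the remaining distinct eigenvalues, not on their ordering. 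The short implication is $(1)\Rightarrow(2)$; the substantial work is $(2)\Rightarrow(1)$.

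For $(1)\Rightarrow(2)$ I would argue as follows. By Lemma~\ref{thm:known}, $P$-polynomiality with respect to $A_j$ forces $(X,R_j)$ to have diameter $d$, and the $d$-th matrix of the resulting ordering is the distance-$d$ relation; call it $R_l$. Theorem~\ref{thm:con_gra} then says that for any pair $(x,y)$ at distance $d$ the $(x,y)$-entry of $E_h$ equals $-K_h/|X|$. On the other hand, expanding $E_h=\frac1{|X|}\sum_i Q_h(i)A_i$ shows that this same entry equals $Q_h(l)/|X|$, because $(x,y)\in R_l$. Equating the two expressions gives $K_h=-Q_h(l)$ for every $h\in\{1,\dots,d\}$, which is exactly $(2)$, with $A_l$ the $d$-th matrix.

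For $(2)\Rightarrow(1)$ I would introduce the degree-$(d-1)$ Lagrange polynomials
\[
f_h(t)=\prod_{i=1,2,\dots,d,\ i\ne h}\frac{t-P_j(i)}{P_j(h)-P_j(i)},\qquad h=1,\dots,d,
\]
for which $f_h(A_j)=K_hE_0+E_h$. Writing $E_0=\frac1{|X|}J=\frac1{|X|}\sum_iA_i$ and $E_h=\frac1{|X|}\sum_iQ_h(i)A_i$, the coefficient of $A_l$ in $f_h(A_j)$ is $\frac1{|X|}(K_h+Q_h(l))$, which vanishes by $(2)$. Thus every $f_h(A_j)$ lies in the span of $\{A_i:i\ne l\}$. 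Because $f_1,\dots,f_d$ form a basis of the polynomials of degree at most $d-1$ and the minimal polynomial of $A_j$ has degree $d+1$, the matrices $f_1(A_j),\dots,f_d(A_j)$ are linearly independent and span $W:=\langle I,A_j,\dots,A_j^{d-1}\rangle$. A dimension count ($\dim W=d=\dim\langle A_i:i\ne l\rangle$) then forces $W=\langle A_i:i\ne l\rangle$; in particular $A_l\notin W$, so writing $A_l=g(A_j)$ we have $\deg g=d$ exactly.

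The final step converts this into the distance statement. Since $A_j^t\in W=\langle A_i:i\ne l\rangle$ for $t\le d-1$, the common value of $A_j^t$ on the pairs of $R_l$ (its $A_l$-coefficient) is $0$; hence no pair in $R_l$ is joined by a walk of length $\le d-1$, so every $R_l$-pair is at distance exactly $d$. Conversely, for a pair at distance $d$ all terms of $g(A_j)$ below degree $d$ vanish while the top term is a nonzero multiple of the (positive) number of geodesics, so $A_l$ has a nonzero, hence unit, entry there; thus $R_l$ contains every distance-$d$ pair. Therefore $R_l$ is exactly the distance-$d$ relation, and being a nonempty relation of the scheme it witnesses that the diameter is $d$. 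Lemma~\ref{thm:known} then yields $P$-polynomiality with respect to $A_j$, with $A_l$ as the $d$-th matrix. I expect the main obstacle to be the middle step: turning the vanishing coefficients from $(2)$ into the clean identification $W=\langle A_i:i\ne l\rangle$ and then into the two-sided containment characterizing $R_l$ as the distance-$d$ relation, where one must keep careful track of the distinction between the scheme's relation $R_l$ and the graph metric on $(X,R_j)$.
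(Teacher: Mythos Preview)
The paper does not give its own proof of this theorem: it is quoted as a known result from \cite{KNX,NT11}, and the only argument the paper supplies is the remark preceding the statement, namely that Theorem~\ref{thm:con_gra} together with Lemma~\ref{thm:known} yields the implication $(1)\Rightarrow(2)$. Your proof of $(1)\Rightarrow(2)$ is exactly this: Lemma~\ref{thm:known} gives diameter $d$, the last relation $R_l$ in the $P$-polynomial ordering is the distance-$d$ relation, Theorem~\ref{thm:con_gra} computes the $(x,y)$-entry of $E_h$ for $(x,y)\in R_l$ as $-K_h/|X|$, and comparison with $E_h=\tfrac1{|X|}\sum_iQ_h(i)A_i$ gives $K_h=-Q_h(l)$. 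So on that half you match the paper verbatim.

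For $(2)\Rightarrow(1)$ the paper offers nothing beyond the citation, and your argument is a correct self-contained proof. The key identity $f_h(A_j)=K_hE_0+E_h$ together with hypothesis $(2)$ kills the $A_l$-coefficient of every $f_h(A_j)$; since $\{f_h\}_{h=1}^d$ is a basis of polynomials of degree at most $d-1$ and $A_j$ has $d+1$ distinct eigenvalues, the span $W=\langle I,A_j,\dots,A_j^{d-1}\rangle$ is $d$-dimensional and contained in $\langle A_i:i\ne l\rangle$, hence equal to it. (Note this forces $l\ne0$, since $I=A_0\in W$; you use this implicitly when you call $R_l$ a nonempty relation witnessing diameter $d$.) From $W=\langle A_i:i\ne l\rangle$ you read off that $(A_j^t)_{xy}=0$ for all $t\le d-1$ and $(x,y)\in R_l$, so $R_l$-pairs are at distance $\ge d$, hence exactly $d$; and conversely any distance-$d$ pair has $(A_l)_{xy}=g(A_j)_{xy}$ equal to the leading coefficient of $g$ times the (positive) number of geodesics, hence nonzero, so lies in $R_l$. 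Thus $R_l$ is the distance-$d$ relation, the diameter is $d$, and Lemma~\ref{thm:known} finishes. The only point worth making explicit in a final write-up is that $A_l$ is a polynomial in $A_j$ at all: this holds because the $d+1$ distinct eigenvalues of $A_j$ make every $E_h$, hence every element of the Bose--Mesner algebra, a polynomial in $A_j$ of degree at most $d$.
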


Though the condition in Theorem~\ref{thm:P-poly2} is fairly simple, there are many examples. 

\begin{example}
Every connected strongly regular graph with $v$ vertices satisfies the assumption
 in Theorem~\ref{thm:P-poly2} because of $v>M(k,1)=1+k$. 
\end{example}
\begin{example}
Let $G$ be a connected regular graph of girth $g$, with $d+1$ distinct eigenvalues, and $v$ vertices,  which satisfies $g\geq 2d-1$. It is known that $G$ is distance-regular \cite{NX, ADFX}.  
We have the lower bound $v \geq M(k,d-1)$ from the assumption of girth  \cite{T66}. 
If $G$ attains this bound, then $G$ is a Moore graph with only $d$ distinct eigenvalues. Therefore $v > M(k,d-1)$ holds, and  
$G$ satisfies the condition in Theorem~\ref{thm:P-poly2}. 
Known examples are listed in \cite{NX}. 
For instance, a Moore graph satisfies $g\geq 2d-1$. 
\end{example}  
\begin{example}
Infinite families of distance-regular graphs of diameter $d$ with unbounded degree $k$ satisfy the condition
 in Theorem~\ref{thm:P-poly2} for sufficiently large $k$.  Indeed the number of the vertices can be expressed by the polynomial 
$\sum_{i=0}^d v_i(k)$ in $k$ of degree $d$.  
For example, the Johnson scheme $J(n,3)$ satisfies the condition for every $n \geq 51$, and 
the Hamming scheme $H(3,q)$ satisfies the condition for every $q\geq 7$.
\end{example}
\label{sec:2}

\section{Spherical sets and $Q$-polynomial schemes}
 The dual results of those in 
Section \ref{sec:2} will be obtained in this section. 
Let $A(X)=\{\langle x,y \rangle \mid x, y \in X, x\ne y\}$ for $X$ in the
unit sphere $S^{m-1}$, where $\langle x,y \rangle$
be the usual inner product of $x$ and $y$. 
Let $X$ be a finite subset in $S^{m-1}$ which satisfies $|A(X)|\leq d$, where $d$ is not as in the previous section. 
 Let $\theta^*_1>\cdots > \theta^*_s$ be the elements in $A(X)$, and $\theta^*_0=1$, where $s\leq d$ and $\theta_0^*>\theta^*_1>\cdots > \theta^*_s$. Then we have 
the absolute bound \cite{DGS77}: 
\begin{equation*}
|X| \leq N(m,d)=\binom{m+d-1}{d}+\binom{m+d-2}{d-1}.
\end{equation*}
We can obtain the graph $G_i=(X,R_i)$,  
where $R_i=\{(x,y)  \mid x,y\in X, \langle x,y \rangle=\theta^*_i  \}$ for each $i \in \{1,2, \ldots,d\}$.  
Let $A_i$ be the adjacency matrix of $G_i$. 
For each $i \in \{1,2,\ldots,d\}$, we define 
\[
K_i^*= \prod_{j=1,2,\ldots,s, j\ne i} \frac{\theta^*_0-\theta^*_j}{\theta^*_i-\theta^*_j}. 
\]
We say an $n \times n$ matrix $E$ is {\it Schur-connected} 
if there is a polynomial $q$ such that $q(E^{\circ})$ has 
rank $n$ \cite{Gb}, where $\circ$ means the Hadamard product. Note that the Gram matrix $M$ of a finite set $X$ in $S^{m-1}$ is Schur-connected by taking $q(x)$ as the annihilator $\prod_{\alpha \in A(X)}(x-\alpha)$. 
The {\it Schur-diameter} of  $E$ is the least integer $d$ 
such that $q(E^{\circ})$ has rank $n$ for some polynomial $q$ of degree $d$ \cite{Gb}. If the Schur-diameter of $M$  is $d$, then 
$|A(X)|  \geq d$.  Let $P_i(S^{m-1})$ denote the linear space of the restrictions of polynomials of 
degree at most $i$, in $m$ variables, to $S^{m-1}$.

We begin with the following result.  
\begin{theorem} \label{thm:sph_set}
Let $X$ be a finite set in $S^{m-1}$ which satisfies the Schur-diameter of the Gram matrix $M$ is equal to $d$. If $|A(X)|=d$ then $-K_i^*$ is an eigenvalue of $A_i$ for each $i \in \{1,2,\ldots, d\}$. Moreover the multiplicity 
of $-K_i^*$ is at least $|X|-N(m,d-1)$.
\end{theorem}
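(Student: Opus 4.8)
The plan is to dualize the proof of Theorem~\ref{thm:con_gra} verbatim, trading the adjacency algebra of a graph for the Schur algebra (under the Hadamard product) of the Gram matrix $M$, and trading ``ordinary'' matrix powers for Hadamard powers. Recall that $M$ has a spectral decomposition relative to the Hadamard product: since $M$ is the Gram matrix of $X \subseteq S^{m-1}$, we may write $M = \sum_{j=0}^{s} \theta_j^* F_j$ for suitable idempotents $F_j$ in the Bose--Mesner-type algebra generated by $I, M, M^{\circ 2}, \ldots$ under $\circ$; here $F_0 = I$ corresponds to $\theta_0^* = 1$, and each $F_j$ has rank at most $\dim P_1(S^{m-1}) = m$ by the standard addition-formula / positive-semidefiniteness argument of Delsarte--Goethals--Seidel. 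The Schur-diameter being $d$ means exactly that no polynomial of degree $< d$ in $M^{\circ}$ has full rank $n = |X|$, equivalently that the $d+1$ matrices $I, M^{\circ}, \ldots, M^{\circ d}$ are linearly independent in the Schur sense; and $|A(X)| = d$ means there are exactly $d+1$ distinct inner-product values, so $s = d$ and the $F_j$ for $j = 0, 1, \ldots, d$ are genuinely $d+1$ distinct nonzero idempotents.

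First I would define, in parallel with $f_i(t)$ from the earlier proof, the Lagrange-type polynomial
\[
g_i(t) = \prod_{j = 1, 2, \ldots, d,\ j \ne i} \frac{t - \theta_j^*}{\theta_i^* - \theta_j^*}
\]
of degree $d-1$, and evaluate it on $M$ under the Hadamard product: $g_i(M^{\circ}) = \sum_{j=0}^{d} g_i(\theta_j^*) F_j = K_i^* F_0 + F_i$, since $g_i(\theta_0^*) = g_i(1) = K_i^*$, $g_i(\theta_i^*) = 1$, and $g_i(\theta_j^*) = 0$ for the remaining $j \in \{1, \ldots, d\}$. Next I would observe that $A_i = \sum_{j : \theta_j^* = \theta_i^*} F_j$ restricted appropriately — more precisely, the Hadamard indicator of the relation $\langle x, y\rangle = \theta_i^*$ is expressible as a polynomial in $M^{\circ}$, and the key point is that $K_i^* I + A_i$ can be realized. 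The cleanest route: let $h_i(t) = \prod_{j=1,\ldots,d,\, j\ne i}\frac{t-\theta_j^*}{\theta_i^*-\theta_j^*}$ so that $h_i(M^\circ)$ has $(x,y)$-entry equal to $1$ if $\langle x,y\rangle = \theta_i^*$, equal to $K_i^*$ if $x = y$, and $0$ otherwise; hence $h_i(M^\circ) = A_i + K_i^* I$. Because $\deg h_i = d-1 <  d$ and the Schur-diameter is $d$, $h_i(M^\circ)$ does not have full rank $n$, so it has $0$ as an eigenvalue; equivalently $A_i$ has $-K_i^*$ as an eigenvalue. For the multiplicity bound, I would count: the rank of $h_i(M^\circ) = \sum_{j=0}^d h_i(\theta_j^*)F_j = F_i + K_i^* F_0$ is at most $\operatorname{rank} F_i + \operatorname{rank} F_0 \le m + (\dim P_{d-1}(S^{m-1}) - \dim P_{d-2}(S^{m-1}))$; summing the ranks of $F_0, \ldots, F_d$ against the standard bound $\operatorname{rank} F_j \le \dim P_j(S^{m-1}) - \dim P_{j-1}(S^{m-1})$ and using $n = \sum_j \operatorname{rank} F_j$ forces $\operatorname{rank} h_i(M^\circ) \le N(m, d-1)$, whence the eigenvalue $-K_i^*$ of $A_i$ has multiplicity $n - \operatorname{rank}(A_i + K_i^* I) \ge n - N(m,d-1) = |X| - N(m,d-1)$.

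The main obstacle I anticipate is justifying precisely the rank bound $\operatorname{rank} F_j \le \dim P_j(S^{m-1}) - \dim P_{j-1}(S^{m-1})$ for the Hadamard idempotents, and the companion fact that $\sum_{j=0}^d \operatorname{rank} F_j = n$ — i.e., that $g_i(M^\circ)$ really does decompose with ranks adding to the absolute-bound quantity $N(m,d-1)$ when we truncate to degree $d-1$. This is the Hadamard-algebra analogue of "$d+1$ distinct eigenvalues $\Leftrightarrow$ diameter $d$" and is exactly where the Delsarte--Goethals--Seidel absolute bound machinery enters; I would invoke the linear-programming / Gegenbauer-positivity setup from \cite{DGS77} to certify that each $F_j$, being the Gram matrix of the projection of $X$ onto the space of harmonic polynomials of degree $j$, has rank at most the dimension of that harmonic space, and that these spaces together with degrees $\le d$ span, giving the rank sum $n$. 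Everything else is a direct transcription of the short argument already given for Theorem~\ref{thm:con_gra}, with $A \leftrightarrow M^\circ$, $E_j \leftrightarrow F_j$, "diameter" $\leftrightarrow$ "Schur-diameter", and the Moore bound $\leftrightarrow$ the absolute bound.
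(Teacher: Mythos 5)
The first half of your argument is sound and coincides with the paper's: the Lagrange-type polynomial $h_i$ of degree $d-1$ satisfies $h_i(M^\circ)=A_i+K_i^*I$ entrywise (using that the diagonal of $M$ is all ones), and since the Schur-diameter of $M$ is $d$, no polynomial of degree $d-1$ applied to $M^\circ$ can have full rank, so $-K_i^*$ is an eigenvalue of $A_i$. The gap is in the multiplicity bound. The idempotents $F_j$ in your Hadamard decomposition $M=\sum_j\theta_j^*F_j$ are necessarily the $0$--$1$ relation matrices $A_j$ themselves (they are the Hadamard idempotents of the span of the level sets of $M$), and these do \emph{not} satisfy $\operatorname{rank}F_j\le\dim P_j(S^{m-1})-\dim P_{j-1}(S^{m-1})$, nor do their ranks sum to $|X|$; for a generic $d$-distance set each $A_j$ has rank comparable to $|X|$. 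You have transplanted the rank accounting that is valid for the \emph{ordinary} spectral idempotents of a graph (or for the positive semidefinite Gegenbauer matrices of Delsarte--Goethals--Seidel) onto the wrong basis: the small-rank harmonic matrices $(G_k(\langle x,y\rangle))_{x,y}$ are not the Hadamard idempotents appearing in $h_i(M^\circ)=F_i+K_i^*F_0$, and outside an association scheme no algebra identifies the two. So the key inequality $\operatorname{rank}h_i(M^\circ)\le N(m,d-1)$ is not established by your argument.

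The repair is more direct than the machinery you invoke, and is what the paper uses (Lemma~2.2 of \cite{N11}): the $(x,y)$-entry of $h_i(M^\circ)$ is $h_i(\langle x,y\rangle)$, so the row indexed by $x$ is the restriction to $X$ of the function $\xi\mapsto h_i(\langle x,\xi\rangle)$, which lies in $P_{d-1}(S^{m-1})$ because $\deg h_i=d-1$. Hence the row space of $A_i+K_i^*I$ is contained in the image of the evaluation map $P_{d-1}(S^{m-1})\to\mathbb{R}^X$, giving $\operatorname{rank}(A_i+K_i^*I)\le\dim P_{d-1}(S^{m-1})=N(m,d-1)$, and therefore a zero eigenspace of dimension at least $|X|-N(m,d-1)$. (Equivalently, one may expand $h_i$ itself in Gegenbauer polynomials and bound the rank of each Gegenbauer matrix by the dimension of the corresponding harmonic space, as in \cite{DGS77}; but this expansion must be applied to the degree-$(d-1)$ polynomial $h_i$, not to the relation matrices $F_j$.)
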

\begin{proof}
Define  
\[
f_i^*(t)=\prod_{j=1,2,\ldots,d, j\ne i} 
\frac{t-\theta^*_j}{\theta^*_i-\theta^*_j}
\] 
for each $i \in \{1,2,\ldots, d\}$. Note
that every diagonal entry of $M$ is $1$.  
Then we have
\begin{align} \label{eq:fi}
f_i^*(M^\circ)= K_i^* I + A_i.
\end{align} 
For each $x\in X$ and $m$ variables $\xi=(\xi_1,\ldots, \xi_m)$, we consider 
a polynomial $f_i^*(\langle x,\xi \rangle )$. 
By Lemma~2.2 in \cite{N11}, the rank of $K_i^*I+A_i=(f_i^*(\langle x,y \rangle ))_{x,y \in X}$ is bounded above by $N(m,d-1)=\dim P_{d-1}(S^{m-1})$. 
Therefore the matrix \eqref{eq:fi} has  
eigenvalue zero with multiplicity at least $|X|-N(m,d-1)$. Thus $A_i$ has the eigenvalue $-K_i^*$
with multiplicity at least $|X|-N(m,d-1)$.
\end{proof}

We use the absolute bound to show 
that a large spherical set satisfies the assumption of Theorem~\ref{thm:sph_set}. 
\begin{theorem} \label{thm:main_sph}
Let $X$ be a finite set in $S^{m-1}$ which satisfies $|A(X)|\leq d$.
Assume $|X|>N(m,d-1)$ holds. Then we have the following.
\begin{enumerate}
\item The Schur-diameter of the Gram matrix $M$ of $X$ is equal to $d$. 
\item $X$ has $d$ inner  products, namely $|A(X)|=d$.  
\item $-K_i^*$ is an eigenvalue of $A_i$ for each $i \in \{1,2,\ldots, d\}$.
\item The multiplicity of $-K_i^*$ is at least 
$|X|-N(m,d-1)$.
\end{enumerate}  
\end{theorem}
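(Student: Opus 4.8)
The plan is to dualize the proof of Theorem~\ref{thm:gen_P} essentially line by line, with the Moore bound $M(k,d-1)$ replaced by the absolute bound $N(m,d-1)$, the diameter replaced by the Schur-diameter, and the eigenspaces of $G$ replaced by the polynomial space $P_{d-1}(S^{m-1})$. The key engine is the rank estimate $\mathrm{rank}\, q(M^\circ) \le \dim P_{\deg q}(S^{m-1})$ already invoked in the proof of Theorem~\ref{thm:sph_set} (Lemma~2.2 in \cite{N11}), together with the remark recorded just before that theorem that a Gram matrix of Schur-diameter $d$ forces $|A(X)| \ge d$.

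For assertion (1), I would first note that the Schur-diameter of $M$ is at most $d$: taking the annihilator $q(t) = \prod_{\alpha \in A(X)}(t-\alpha)$, which has degree $|A(X)| \le d$, one gets $q(M^\circ) = q(1)I$, and $q(1)\ne 0$ since $1 \notin A(X)$, so $q(M^\circ)$ has rank $|X|$. Suppose for contradiction that the Schur-diameter were at most $d-1$. Then there is a polynomial $q$ of degree at most $d-1$ with $q(M^\circ)$ of full rank $|X|$. Since the $x$-th row of $q(M^\circ)$ is the evaluation on $X$ of the function $\xi \mapsto q(\langle x,\xi\rangle)$, which lies in $P_{d-1}(S^{m-1})$, Lemma~2.2 in \cite{N11} gives $\mathrm{rank}\, q(M^\circ) \le \dim P_{d-1}(S^{m-1}) = N(m,d-1)$, whence $|X| \le N(m,d-1)$, contradicting the hypothesis. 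Hence the Schur-diameter equals $d$.

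Given (1), assertion (2) is immediate: as observed before Theorem~\ref{thm:sph_set}, Schur-diameter $d$ forces $|A(X)| \ge d$, and combined with the standing assumption $|A(X)| \le d$ this yields $|A(X)| = d$. With (1) and (2) established, the hypotheses of Theorem~\ref{thm:sph_set} are met, so assertions (3) and (4) are precisely its conclusions.

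I do not anticipate a serious obstacle, since the whole argument transcribes the proof of Theorem~\ref{thm:gen_P}; the only points requiring care are (a) confirming that ``Schur-diameter $\le d-1$'' really produces the rank bound $N(m,d-1)$ and not something larger, which rests on the identity $\dim P_{d-1}(S^{m-1}) = N(m,d-1)$ and on Lemma~2.2 of \cite{N11}, and (b) observing that $N(m,\cdot)$ is nondecreasing, so that a Schur-diameter strictly below $d$ genuinely contradicts $|X| > N(m,d-1)$.
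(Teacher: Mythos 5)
Your proposal is correct and follows essentially the same route as the paper: the upper bound on the Schur-diameter via the annihilator polynomial (recorded in the paper's preamble remarks), the contradiction with $|X|>N(m,d-1)$ via the rank bound of Lemma~2.2 in \cite{N11} applied to a degree-$(d-1)$ polynomial, and then (2)--(4) exactly as you deduce them from (1) and Theorem~\ref{thm:sph_set}. The only difference is cosmetic: you spell out inside the proof the annihilator computation $q(M^\circ)=q(1)I$ that the paper states just before Theorem~\ref{thm:sph_set}.
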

\begin{proof}
(1): Since  $|A(X)|\leq d$ holds, the Schur-diameter of $M$ 
is at most $d$. 
If the Schur-diameter of $M$ is less than $d$, then
there exists a polynomial $q(x)$ of degree less than $d$ such 
that the rank of $q(M^{\circ})$ is $|X|$. 
Since $q(\langle x,\xi \rangle ) \in P_{d-1}(S^{m-1})$ for $x\in S^{m-1}$,  in variables  
 $\xi=(\xi_1, \ldots, \xi_m)$,  we have $|X| \leq N(m,d-1)=\dim P_{d-1}(S^{m-1})$ by Lemma~2.2 in \cite{N11}, a contradiction. Therefore the Schur-diameter of the Gram matrix $M$ of $X$ is equal to $d$.

(2): From (1), we have $|A(X)| \geq d$, hence $|A(X)|=d$.  

(3),(4): These follow immediately 
from (1), (2), and Theorem~\ref{thm:sph_set}. 
\end{proof}

We apply the above theorems to symmetric association schemes. 
First we recall the following fact. 
\begin{lemma}[{\cite[Lemma 2.2]{KN12}}] \label{thm:known2}
Let $\mathfrak{X}=(X,\{R_i\}_{i=0}^d)$ be a symmetric association scheme of class $d$. 
Suppose that $Q_j(0)$ is distinct from 
$Q_j(i)$ for $i=1,\ldots,d$. 
Then $E_j$ has Schur-diameter $d$ if and only if $\mathfrak{X}$ is $Q$-polynomial with respect to $E_j$. 
\end{lemma}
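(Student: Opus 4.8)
The statement is the $Q$-polynomial (Hadamard) analogue of Lemma~\ref{thm:known}, and the plan is to mirror the proof of that lemma under the formal duality interchanging the ordinary product with the Hadamard product, the basis $\{A_i\}$ with $\{E_i\}$, the eigenvalues $P_j(i)$ with the dual eigenvalues $Q_j(i)$, the intersection numbers with the Krein parameters, and the path distance in $(X,R_j)$ with the ``Schur-distance'' $\partial^{*}(h):=\min\{t : E_j^{\circ t}\ \text{has nonzero}\ E_h\text{-component}\}$. I would begin by recording, for a polynomial $q$, the identity
\[
q(E_j^{\circ})=\sum_{k}q\!\left(Q_j(k)/|X|\right)A_k=\sum_{h}\beta_h(q)\,E_h ,
\]
and the observation that $q(E_j^{\circ})$ has rank $|X|$ precisely when $\beta_h(q)\neq 0$ for every $h$. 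The hypothesis $Q_j(0)\neq Q_j(i)$ for $i\geq 1$ enters at the outset: it forces $I$ to lie in the Hadamard-subalgebra generated by $E_j$, so $E_j$ is Schur-connected, every $E_h$ occurs with nonzero coefficient in some Hadamard power of $E_j$, each $\partial^{*}(h)$ is finite, and the Schur-diameter is well defined.

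The one step that is not a routine transcription is the dual of the trivial identity $\mathrm{diam}(X,R_j)=\max_i\partial(R_i)$, namely that \emph{the Schur-diameter of $E_j$ equals $\max_h\partial^{*}(h)$}. One inequality is immediate: a polynomial $q$ of degree $<\max_h\partial^{*}(h)$ has $\beta_h(q)=0$ for the index $h$ attaining the maximum, so $q(E_j^{\circ})$ is singular. For the reverse inequality one must preclude cancellation, and here I would use the nonnegativity of the Krein parameters: it makes the coefficient $\hat c^{(t)}_h$ of $E_h$ in $E_j^{\circ t}$ nonnegative, with $\hat c^{(\partial^{*}(h))}_h>0$; then a polynomial $q$ with strictly positive coefficients in degrees $0,1,\dots,\max_h\partial^{*}(h)$ gives $q(E_j^{\circ})=\sum_h\beta_h(q)E_h$ with all $\beta_h(q)>0$, a positive linear combination of the mutually orthogonal projections $E_h$, hence positive definite.

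Granting this identity, the two implications follow as in Lemma~\ref{thm:known}. If $\mathfrak{X}$ is $Q$-polynomial with respect to $E_j$, the tridiagonality of the Krein array together with the non-vanishing of its off-diagonal entries yields $\partial^{*}(i_t)=t$ along the $Q$-polynomial ordering $E_0,E_j=E_{i_1},\dots,E_{i_d}$, so $\max_h\partial^{*}(h)=d$. Conversely, if the Schur-diameter of $E_j$ is $d$, then $\max_h\partial^{*}(h)=d$; since $W_t:=\mathrm{span}(J,E_j,\dots,E_j^{\circ t})$ satisfies $W_t\subseteq\mathrm{span}\{E_h : \partial^{*}(h)\leq t\}$ and $\dim W_t\leq t+1$, a short counting argument forces the $Q_j(k)$ to be pairwise distinct, $\dim W_t=t+1$ for each $t$, and $\partial^{*}$ to be a bijection of $\{0,\dots,d\}$; setting $i_t=(\partial^{*})^{-1}(t)$ gives $W_t=\mathrm{span}(E_{i_0},\dots,E_{i_t})$, so $E_{i_t}$ lies in $W_t$ but not in $W_{t-1}$ and is therefore a polynomial of degree exactly $t$ in $E_j$ under $\circ$, which is the $Q$-polynomial property with respect to $E_j$. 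The expected main obstacle is the identity ``Schur-diameter $=\max_h\partial^{*}(h)$''; once it is established, everything else is the verbatim dual of the $P$-polynomial argument behind Lemma~\ref{thm:known}.
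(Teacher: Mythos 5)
The paper does not prove this lemma; it is quoted verbatim from \cite[Lemma 2.2]{KN12}, so there is no in-paper argument to compare against. Judged on its own, your proof is correct and follows the standard route (the dual of Godsil's argument behind Lemma~\ref{thm:known}). The two points where a sketch like this could go wrong both check out. First, the identity ``Schur-diameter $=\max_h\partial^{*}(h)$'' does need the non-cancellation step you flag, and the nonnegativity of the Krein parameters delivers it: by induction $E_j^{\circ t}$ has nonnegative coefficients on the basis $\{E_h\}$, so a polynomial with strictly positive coefficients through degree $\max_h\partial^{*}(h)$ makes every $\beta_h$ strictly positive, and the resulting matrix is a positive combination of all the orthogonal idempotents, hence of full rank. (Mere Schur-connectedness would not suffice here, since ``$E_h$ appears in $E_j^{\circ t}$'' need not persist to $t+1$ when $q_{jh}^{h}=0$; your positivity trick sidesteps exactly this.) Second, the ``short counting argument'' in the converse is compressed but sound, provided you make explicit why the $Q_j(k)$ must be pairwise distinct: since $q(E_j^{\circ})=\sum_k q\!\left(Q_j(k)/|X|\right)A_k$, the space $W_t$ has dimension $\min(t+1,e+1)$ where $e+1$ is the number of distinct values among the $Q_j(k)$, so the Hadamard powers stabilize after step $e$ and the Schur-diameter is at most $e$; Schur-diameter $d$ then forces $e=d$. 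After that, $\dim W_t=t+1$ together with $W_t\subseteq\mathrm{span}\{E_h:\partial^{*}(h)\leq t\}$ and $|S_d|=d+1$ pins down $\partial^{*}$ as a bijection exactly as you say, and the $Q$-polynomial ordering falls out. So the argument is complete in outline; if you write it up, spell out the two items above, and note that the degree-zero term of $q$ must be interpreted as a multiple of $J$ (the Hadamard identity), which is how the hypothesis $Q_j(0)\neq Q_j(i)$ puts $I$ into the Hadamard algebra generated by $E_j$ via the annihilator polynomial of $\{Q_j(i)/|X|: i\geq 1\}$.
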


By Theorem~\ref{thm:main_sph} and Lemma~\ref{thm:known2}, 
we immediately obtain the following theorem which shows the $Q$-polynomial property of a large symmetric association scheme. 
\begin{theorem}\label{thm:Q-poly}
Let $\mathfrak{X}=(X,\{R_i\}_{i=0}^d)$ be a symmetric association scheme. Assume $Q_j(0)$ is 
distinct from $Q_j(1),\ldots,Q_j(d)$. 
If $|X| > N(m_j,d-1)$, then $\mathfrak{X}$ is 
a $Q$-polynomial  scheme with respect to $E_j$.
\end{theorem}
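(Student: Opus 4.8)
The plan is to deduce Theorem~\ref{thm:Q-poly} directly from the dual machinery already assembled, mirroring exactly the argument that produced Theorem~\ref{thm:P-poly2} from Theorem~\ref{thm:gen_P} and Lemma~\ref{thm:known}. First I would recall that a symmetric association scheme $\mathfrak{X}=(X,\{R_i\}_{i=0}^d)$ gives rise, via the $j$-th primitive idempotent $E_j$, to a spherical embedding of $X$ into the unit sphere $S^{m_j-1}$: up to scaling, $E_j$ is the Gram matrix of a finite set $X$ on $S^{m_j-1}$, and the distinct off-diagonal inner products of this set are precisely the values $Q_j(i)/|X|$ for $i=1,\ldots,d$. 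Under the hypothesis that $Q_j(0)$ is distinct from $Q_j(1),\ldots,Q_j(d)$, these $d$ inner products are all distinct from the diagonal value $Q_j(0)/|X|=1$, so $|A(X)| \le d$ and the setup of Section~3 applies verbatim with $m=m_j$.

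Next I would invoke Theorem~\ref{thm:main_sph}. Its hypothesis is $|X|>N(m,d-1)$, which is exactly the assumption $|X|>N(m_j,d-1)$ of the present theorem. Conclusion~(1) of Theorem~\ref{thm:main_sph} then gives that the Schur-diameter of the Gram matrix of $X$ — which is $E_j$ up to a positive scalar, and the Schur-diameter is unaffected by scaling the entries, or one can work with $E_j$ itself since its diagonal entries are the constant $m_j/|X|$ and one rescales $f_i^*$ accordingly — equals $d$. Hence $E_j$ has Schur-diameter $d$.

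Finally I would apply Lemma~\ref{thm:known2}: since $Q_j(0)$ is distinct from $Q_j(1),\ldots,Q_j(d)$ and $E_j$ has Schur-diameter $d$, the lemma yields that $\mathfrak{X}$ is $Q$-polynomial with respect to $E_j$, which is the assertion. The only point requiring a word of care is the passage between ``the Gram matrix $M$ of the associated spherical set'' as used in Section~3 and ``the idempotent $E_j$'' as used in Lemma~\ref{thm:known2}: these differ by the normalizing factor $m_j/|X|$, and one must check that ``Schur-diameter $d$'' is invariant under positive rescaling of all entries (it is, since $q(cE^\circ)$ and $q'(E^\circ)$ have the same rank for $q'(x)=q(cx)$, a polynomial of the same degree). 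Once this identification is made, the proof is two lines. I expect this identification of normalizations to be the only genuine obstacle; everything else is a direct citation chain $\text{Theorem~\ref{thm:main_sph}} \Rightarrow \text{Schur-diameter }d \Rightarrow \text{Lemma~\ref{thm:known2}} \Rightarrow Q\text{-polynomial}$.
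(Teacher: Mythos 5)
Your proposal is correct and follows exactly the paper's own route: the paper derives Theorem~\ref{thm:Q-poly} immediately from Theorem~\ref{thm:main_sph} and Lemma~\ref{thm:known2}, which is precisely your citation chain. Your extra remark on the invariance of Schur-diameter under rescaling between the normalized Gram matrix and $E_j$ is a point the paper leaves implicit, and it is handled correctly.
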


We also remark that Theorem~\ref{thm:sph_set}, together with Lemma~\ref{thm:known2}, 
generalizes the implication $(1) \Rightarrow (2)$ of the following known 
characterization of $Q$-polynomial association schemes. 

\begin{theorem}[\cite{KN12,NT11}]
\label{thm:Q-poly2}
Let $\mathfrak{X}=(X,\{R_i\}^d_{i=0})$ be a symmetric
association scheme of class $d$.
Suppose that $Q_j(0),Q_j(1),\ldots,Q_j(d)$
are mutually distinct.
Then the following are equivalent:
\begin{enumerate}
\item $\mathfrak{X}$ is a $Q$-polynomial association scheme
with respect to  $E_j$.
\item There exists $l\in \{0,1,\ldots ,d\}$ such that
for each $h \in \{1,2,\ldots ,d\}$,
\begin{equation*}
\prod_{i=1,2, \ldots, d,
i \neq h}
\frac{Q_j(0)-Q_j(i)}{Q_j(h)-Q_j(i)}= -P_h(l).
\end{equation*}
\end{enumerate}
Moreover if $($2$)$ holds, then $E_l$ is the $d$-th matrix with respect to the 
resulting polynomial ordering.
\end{theorem}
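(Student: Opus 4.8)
The plan is to pass to the spherical realization of $\mathfrak{X}$ determined by $E_j$ and read off everything there. Put $m:=m_j$ and let $M:=(|X|/m)E_j$ be the associated Gram matrix: its diagonal entries equal $1$, and for $(x,y)\in R_i$ with $i\ge 1$ its off-diagonal entry equals $\theta^*_i:=Q_j(i)/m$. Since $Q_j(0),\ldots,Q_j(d)$ are mutually distinct, after relabelling the relations $R_1,\ldots,R_d$ the reals $\theta^*_0>\theta^*_1>\cdots>\theta^*_d$ are distinct, so $|A(X)|=d$. Two elementary facts drive the argument. First, under the Hadamard product $M\circ A_k=\theta^*_k A_k$, so for any polynomial $q$ we have $q(M^{\circ})=\sum_{k=0}^{d}q(\theta^*_k)A_k=\sum_{l=0}^{d}\bigl(\textstyle\sum_{k=0}^{d}q(\theta^*_k)P_k(l)\bigr)E_l$; hence $q(M^{\circ})$ is invertible exactly when $\sum_{k}q(\theta^*_k)P_k(l)\ne 0$ for every $l$. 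Second, $f^*_h$ is the $h$-th Lagrange basis polynomial for the nodes $\theta^*_1,\ldots,\theta^*_d$ and $f^*_h(\theta^*_0)=K^*_h$, so every polynomial $q$ with $\deg q\le d-1$ satisfies $q(\theta^*_0)=\sum_{h=1}^{d}K^*_h\,q(\theta^*_h)$. Finally, $K^*_h=\prod_{i\ne h}(\theta^*_0-\theta^*_i)/(\theta^*_h-\theta^*_i)=\prod_{i\ne h}(Q_j(0)-Q_j(i))/(Q_j(h)-Q_j(i))$, so the product in $(2)$ equals $K^*_h$.

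For $(1)\Rightarrow(2)$ I would run the mechanism behind Theorem~\ref{thm:sph_set}. Assume $\mathfrak{X}$ is $Q$-polynomial with respect to $E_j$ and fix a $Q$-polynomial ordering $E_0,E_j,E_{i_2},\ldots,E_{i_d}$; then for each $k$ the idempotents occupying the first $k+1$ positions span $\{J,M,\ldots,M^{\circ k}\}$, so $\mathrm{span}\{E_0,E_j,\ldots,E_{i_{d-1}}\}=\mathrm{span}\{J,M,\ldots,M^{\circ(d-1)}\}$ is the hyperplane of the Bose--Mesner algebra complementary to $\mathbb{C}E_{i_d}$. Since $\deg f^*_h=d-1$, the matrix $f^*_h(M^{\circ})=K^*_h I+A_h=\sum_{l}(K^*_h+P_h(l))E_l$ lies in that hyperplane, so its $E_{i_d}$-component vanishes, i.e.\ $P_h(i_d)=-K^*_h$ for every $h\in\{1,\ldots,d\}$. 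Taking $l=i_d$ gives $(2)$, and $E_l$ is by construction the $d$-th matrix of the ordering.

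For $(2)\Rightarrow(1)$, by Lemma~\ref{thm:known2} it suffices to show $E_j$ has Schur-diameter exactly $d$; since $|A(X)|=d$ it is at most $d$, so I must only rule out an invertible $q(M^{\circ})$ with $\deg q\le d-1$. Let $l$ be the index provided by $(2)$. Using the dictionary above together with $P_0(l)=1$ and $P_h(l)=-K^*_h$ for $h\ge 1$, we get $\sum_{k}q(\theta^*_k)P_k(l)=q(\theta^*_0)-\sum_{h=1}^{d}K^*_h\,q(\theta^*_h)$, which is $0$ for every $q$ with $\deg q\le d-1$ by the Lagrange identity. Hence $q(M^{\circ})$ always has vanishing $E_l$-component and is never invertible, so the Schur-diameter is $d$ and $\mathfrak{X}$ is $Q$-polynomial with respect to $E_j$. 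For the ``moreover'' clause, feed the ordering just obtained back into $(1)\Rightarrow(2)$: its $d$-th idempotent $E_{l'}$ satisfies $P_h(l')=-K^*_h=P_h(l)$ for all $h\ge 1$ and $P_0(l')=1=P_0(l)$, so columns $l$ and $l'$ of the invertible matrix $(P_i(j))$ coincide and $l'=l$.

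The linear algebra in the Bose--Mesner algebra and the Lagrange interpolation identity are routine bookkeeping. The step carrying the real content, and where I would be most careful, is $(2)\Rightarrow(1)$: one must recognize that the single family of scalar equalities $P_h(l)=-K^*_h$ is precisely what forces the $E_l$-component of every Hadamard polynomial in $M$ of degree below $d$ to vanish --- this is exactly the obstruction preventing the Schur-diameter from dropping below $d$ --- and the normalizations (the factor $|X|/m_j$, the identity $\theta^*_i=Q_j(i)/m_j$, and the correspondence between invertibility of $q(M^{\circ})$ and nonvanishing of the coefficients $\sum_k q(\theta^*_k)P_k(l)$) must be kept straight throughout.
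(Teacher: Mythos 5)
The paper does not prove this theorem: it is imported verbatim from \cite{KN12,NT11} and used only as a point of comparison, so there is no in-paper proof to measure your argument against. Judged on its own, your proof is correct and fits naturally into the machinery of Section 3. The dictionary $q(M^{\circ})=\sum_k q(\theta_k^*)A_k=\sum_l\bigl(\sum_k q(\theta_k^*)P_k(l)\bigr)E_l$, the identification of the product in $(2)$ with $K_h^*$ (the $1/m_j$ normalizations cancel), and the Lagrange identity $q(\theta_0^*)=\sum_{h}K_h^*q(\theta_h^*)$ for $\deg q\le d-1$ are all sound; the direction $(1)\Rightarrow(2)$ correctly extracts $l=i_d$ from the vanishing of the $E_{i_d}$-component of $f_h^*(M^{\circ})=K_h^*I+A_h$, and $(2)\Rightarrow(1)$ correctly converts the scalar identities $P_h(l)=-K_h^*$ into the statement that every degree-$(d-1)$ Hadamard polynomial in $E_j$ kills the $E_l$-component, which pins the Schur-diameter at $d$ and lets Lemma~\ref{thm:known2} finish. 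The identification of $l$ in the ``moreover'' clause via the invertibility of the eigenmatrix $P$ is also fine. Two small points worth making explicit if you write this up: first, that $Q_j(0)=m_j$ is strictly the largest of the $Q_j(i)$ (this follows from positive semidefiniteness of $E_j$ together with the distinctness hypothesis) so that the spherical realization genuinely has $\theta_0^*=1$ as its maximal value and $|A(X)|=d$; second, that your argument reproves, in the association-scheme setting, exactly what the paper obtains for general spherical sets in Theorem~\ref{thm:sph_set}, which is why the author can say that Theorem~\ref{thm:sph_set} plus Lemma~\ref{thm:known2} ``generalizes'' the implication $(1)\Rightarrow(2)$ here.
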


The following are examples satisfying the condition in Theorem~\ref{thm:Q-poly}.

\begin{example}
Every connected strongly regular graph with $v$ vertices satisfies the assumption
 in Theorem~\ref{thm:Q-poly} because of $v> N(m_1,1)=1+m_1$. 
\end{example}

\begin{example}
Let $X\subset S^{m-1}$ be a spherical $t$-design \cite{DGS77} with $d$ distances which satisfies $t\geq 2d-2$.  
Then the set $X$ has the structure of a $Q$-polynomial association scheme \cite{DGS77}. 
From the inequality $t\geq 2d-2$, we have $|X| \geq N(m, d-1)$ \cite{DGS77}, that is called an absolute bound for spherical designs. If $|X| = N(m, d-1)$ holds, then $X$  has only $d-1$ distances \cite{DGS77}. Thus $|X| > N(m, d-1)$ holds. 
The association scheme obtained from $X$ satisfies the assumption in Theorem~\ref{thm:Q-poly}. 
Known examples are listed in \cite{CK07}. 
For instance, a tight spherical design satisfies $t\geq 2d-2$.    
\end{example}

\begin{example}
Infinite families of $Q$-polynomial association schemes  with unbounded multiplicity $m_1$ satisfy the condition
 in Theorem~\ref{thm:Q-poly} for sufficiently large $m_1$.  Indeed the number of the vertices can be expressed by the polynomial 
$\sum_{i=0}^d v_i^*(|X|m_1)$ in $m_1$ of degree $d$. 
For example, the Johnson scheme $J(n,3)$ satisfies the condition for every $n \geq 7$, and 
the Hamming scheme $H(3,q)$ satisfies the condition for every $q\geq 4$.
\end{example}

\textbf{Acknowledgments.} 
The author thanks to Eiichi Bannai, Paul Terwilliger, and
Sho Suda for fruitful discussions. The author also wishes to thank two anonymous 
referees for their valuable suggestions that helped him to substantially improve this paper.  This work was supported by JSPS KAKENHI Grant Numbers 25800011, 26400003.

\noindent
{\it Hiroshi Nozaki}\\
	Department of Mathematics Education, \\
	Aichi University of Education\\
	1 Hirosawa, Igaya-cho, \\
	Kariya, Aichi 448-8542, \\
	Japan.\\ 
	hnozaki@auecc.aichi-edu.ac.jp\\

\end{document}